\documentclass[pdflatex,sn-mathphys-num]{sn-jnl}

\usepackage{graphicx}%
\usepackage{multirow}%
\usepackage{amsmath,amssymb,amsfonts}%
\usepackage{amsthm}%
\usepackage{mathrsfs}%
\usepackage[title]{appendix}%
\usepackage{xcolor}%
\usepackage{textcomp}%
\usepackage{manyfoot}%
\usepackage{booktabs}%
\usepackage{algorithm}%
\usepackage{algorithmicx}%
\usepackage{algpseudocode}%
\usepackage{listings}%

\theoremstyle{thmstyleone}%
\newtheorem{theorem}{Theorem}
\newtheorem{proposition}[theorem]{Proposition}%
\newtheorem{lemma}{Lemma}

\theoremstyle{thmstyletwo}%

\theoremstyle{thmstylethree}%
\newtheorem{definition}{Definition}%

\begin{document}

\title[FPT $t$-hypergraphicality]{Fixed-Parameter Tractability of $t$-Uniform Hypergraphicality}


\author[1,4]{\fnm{Riley} \sur{Brown}}\email{rydb@proton.me}

\author*[2,3,4]{\fnm{Istv\'an} \sur{Mikl\'os}}\email{miklos.istvan@renyi.hu}\orcid{https://orcid.org/0000-0002-1160-6855}

\affil[1]{\orgname{University of Nebraska}, \orgaddress{\street{1400 R Street}, \city{Lincoln}, \postcode{NE 68588}, \state{Nebraska}, \country{United States}}}

\affil[2]{\orgdiv{Department of Stochastics}, \orgname{HUN-REN R\'enyi Institute}, \orgaddress{\street{Re\'altanoda u. 13-15}, \city{Budapest}, \postcode{1053}, \country{Hungary}}}

\affil[3]{\orgdiv{ILAB}, \orgname{HUN-REN SZTAKI}, \orgaddress{\street{L\'agym\'anyosi u. 11}, \city{Budapest}, \postcode{1111}, \country{Hungary}}}

\affil[4]{\orgname{Budapest Semesters in Mathematics}, \orgaddress{\street{Bethlen G\'abor t\'er 2}, \city{Budapest}, \postcode{1071}. \country{Hungary}}}


\abstract{We study the $t$-uniform hypergraphicality problem under a compressed representation of the degree sequence. Instead of listing all vertex degrees explicitly, the input consists of pairs
$$
(\delta_1,n_1),\dots,(\delta_k,n_k),
$$
meaning that exactly $n_i$ vertices have degree $\delta_i$. Thus the parameter $k$ denotes the number of distinct degrees.

Although deciding $t$-hypergraphicality is NP-complete for every fixed $t>2$, we prove that the problem is fixed-parameter tractable parameterized by $(k,t)$. Our result shows that tractability extends substantially beyond previously known bounded-range regimes: even degree sequences with large overall degree spread can be handled efficiently when the number of distinct degrees is bounded.

Our approach decomposes hyperedges according to their types with respect to the degree classes, yielding a bounded-dimension spectrum representation. Using balancing hinge-flips, we show that every feasible spectrum can be transformed into a realization of the prescribed degree sequence. This leads to an integer programming feasibility formulation with
$$
\binom{t+k-1}{k-1}
$$
variables. Applying Lenstra's theorem yields an FPT algorithm running in time
$$
f(k,t)\cdot \mathrm{poly}(L),
$$
where $L$ denotes the encoding length of the compressed input.}

\keywords{Hypergraphicality, Degree sequences, Uniform hypergraphs, Fixed-parameter tractability, Integer programming, Compressed representations}

\pacs[MSC Classification]{05C65, 05C07, 68Q25, 68W40}




\maketitle

\section{Introduction}\label{sec1}

The characterization of degree sequences is one of the classical topics of combinatorics. In the simple graph case, the Erd\H{o}s--Gallai theorem \cite{ErdosGallai} and the Havel--Hakimi algorithm \cite{Hakimi,Havel} give complete characterizations and efficient recognition algorithms for graphical degree sequences. In contrast, the analogous problem for uniform hypergraphs is substantially more difficult.

A sequence $d=(d_1,\dots,d_n)$ of nonnegative integers is called \emph{$t$-hypergraphical} if there exists a simple $t$-uniform hypergraph whose vertex degrees are given by $d$. Already for $t=3$, deciding hypergraphicality is NP-complete, as proved by Deza, Levin, Meesum, and Onn~\cite{Deza}. More recently, Mikl\'os, Ruszink\'o, and Zavalnij extended this hardness result to every fixed $t>2$~\cite{MRZ}. Thus, unlike the graph case, one cannot expect a complete polynomial-time characterization of $t$-hypergraphicality in general.

In the search for tractable subclasses of the hypergraphicality problem, most known positive results rely on some form of degree regularity. One natural direction is to restrict the range of the degrees. For example, nearly regular degree sequences --- where every degree equals either $d$ or $d+1$ --- admit substantially simpler realizability behavior. More generally, bounded-spread regimes have been investigated by Palma, Frosini, and Rinaldi~\cite{PFR}.
These results have been significanty extended in the recent manuscript of Logsdon, Maheswari, Mikl\'os, and Zhang~\cite{LMMZ}, where lower and upper bounds have been identified on the degrees in which $3$-hypergraphicality becomes tractable.

These tractability results are all based on restricting the degree range globally. It has been shown that wide degree ranges already lead back to NP-hardness in the unrestricted setting~\cite{LMMZ}. This raises the question whether tractability can still be recovered from weaker forms of regularity.

The present paper takes such an approach. Instead of controlling the global degree range, we exploit regularity arising from multiplicities of equal degrees. Indeed, each degree class is itself regular, even if the overall degree sequence is highly non-regular. Thus even highly non-regular degree sequences may still possess substantial internal structure provided that the number of distinct degrees is small.

This observation naturally leads to a parameterization by the number $k$ of distinct degrees together with the uniformity parameter $t$. Our main result shows that this compressed degree representation yields fixed-parameter tractability. Importantly, this tractability mechanism is fundamentally different from previously known bounded-range regimes. In particular, our algorithm applies to instances whose overall degree range may be substantially larger than the tractable intervals identified in~\cite{LMMZ}. Of course, unrestricted wide-range instances remain NP-hard~\cite{LMMZ,MRZ}. However, our results show that large degree ranges alone do not preclude tractability, provided that the number of distinct degrees is bounded.

Motivated by this perspective, we study the parameterized complexity of the $t$-hypergraphicality problem under a compressed representation of the degree sequence. Instead of listing all degrees individually, we group equal degrees together. Thus, the input consists of pairs
\[
(\delta_1,n_1),\dots,(\delta_k,n_k),
\]
where exactly $n_i$ vertices have degree $\delta_i$. The parameter $k$ denotes the number of distinct degrees, while $t$ is the uniformity.

This representation is significantly more concise than the explicit degree sequence representation when the number of distinct degrees is small. In particular, the input length is proportional to the number of bits needed to encode the integers $\delta_i$ and $n_i$, rather than to the total number of vertices. Consequently, fixed-parameter tractability in terms of $(k,t)$ is a meaningful and nontrivial notion.

Our main result is that $t$-hypergraphicality is fixed-parameter tractable parameterized by the pair $(k,t)$. The proof combines structural balancing arguments with an integer programming formulation of bounded dimension.

The key combinatorial tool is the \emph{balancing hinge-flip} operation. A hinge-flip replaces one hyperedge by another hyperedge sharing $t-1$ vertices. Such operations allow local redistribution of degrees while preserving uniformity. Variants of these balancing techniques already appeared in earlier work of Mikl\'os, Ruszink\'o, and Zavalnij~\cite{MRZ}. In the present paper, we use them to transform arbitrary realizations into realizations whose degrees are balanced inside prescribed vertex classes.

A second key concept is the decomposition of hyperedges according to their \emph{types}. After partitioning the vertex set into degree classes
\[
V = V_1 \cup \cdots \cup V_k,
\]
each hyperedge determines a vector recording how many vertices it contains from each class. The numbers of hyperedges of the various types form the \emph{spectrum vector} of the hypergraph. Crucially, the spectrum vector determines the total degree contribution inside every degree class. This leads to an integer feasibility formulation with
\[
\binom{t+k-1}{k-1}
\]
variables, depending only on the parameters $k$ and $t$.

Combining this formulation with classical fixed-dimension integer programming algorithms yields our main theorem: $t$-hypergraphicality is fixed-parameter tractable parameterized by $(k,t)$.

Our results demonstrate that although hypergraphicality is NP-hard in general, substantial algorithmic tractability emerges when the degree structure is sufficiently compressed.








\section{Preliminaries}


\begin{definition}
A \emph{$t$-uniform hypergraph} is a pair $H = (V,E)$ where $V$ is a finite vertex set and $E \subseteq \binom{V}{t}$. The degree of a vertex $v$ is the number of hyperedges $e$ such that $v\in e$. The degree of vertex $v$ is denoted by $d(v_i)$.
\end{definition}

\begin{definition}
Let $d = (d_1,\dots,d_n)$ be a sequence of nonnegative integers. We say that $d$ is \emph{$t$-hypergraphical} if there exists a $t$-uniform hypergraph $H = (V,E)$ such that $V = \{v_1, v_2,\ldots,v_n\}$ and for all $i=1,2,\ldots, n$ $d(v_i) = d_i$.
\end{definition}


Since we consider degree sequences in which there are $k$ different degrees,
we assume that the degree sequence is given in compressed form as follows. Let the distinct degrees be
\[
\delta_1, \delta_2, \dots, \delta_k,
\]
and let $n_i$ denote the number of vertices of degree $\delta_i$. Thus
\[
\sum_{i=1}^k n_i = n.
\]

\begin{definition}
The \emph{input size} $L$ is the total number of bits required to encode the integers
\[
(\delta_1,n_1), (\delta_2,n_2), \dots, (\delta_k,n_k).
\]
\end{definition}

In particular, $L = O\!\left(\sum_{i=1}^k (\log \delta_i + \log n_i)\right)$.


\begin{definition}
A problem is \emph{fixed-parameter tractable (FPT)} with parameter set $p$ if it can be solved in time
\[
f(p)\cdot \mathrm{poly}(L),
\]
where $L$ is the input size measured in bits.
\end{definition}

In this paper, the parameter set is $p:= (k,t)$, where $k$ is the number of distinct degrees, and $t$ is the uniformity.


The key tool of our FTP algorithm is the hinge-flip operation. Hinge-flip operations were first used in approximating the permanent \cite{jerrumsinclair}.  Hinge-flip operations were considered on simple graphs in several recent papers \cite{RechnerStrowickMullerHannemann,AmanatidisKleer2023,ErdosMezeiMiklos} and also on $t$-uniform hypergraphs \cite{LiMiklos,LMMZ,MRZ}.

\begin{definition}
Let $H=(V,E)$ be a $t$-uniform hypergraph. A \emph{hinge-flip} is an operation that removes a hyperedge $e \in E$ and adds a hyperedge $e' \notin E$ such that
\[
|e \cap e'| = t-1.
\]
\end{definition}

Observe that a hinge-flip replaces exactly one vertex of $e$ by another vertex, hence it changes the degrees of exactly two vertices: one decreases by $1$ and one increases by $1$.

\begin{definition}
A hinge-flip is called \emph{balancing} if it decreases the degree of a vertex with larger degree $d_j$ and increases the degree of a vertex with smaller degree $d_i$ such that $d_j > d_i+1$.
\end{definition}

Balancing hinge-flips will be used to redistribute degrees within vertex classes. The simple graph version of the following lemma was observed by Aigner and Triesh \cite{AignerTriesh}. It was also proved for $3$-uniform \cite{LiMiklos} and $t$-uniform \cite{MRZ} hypergraphs. For sake of completeness, we give the proof also here.

\begin{lemma}[Existence of balancing hinge-flips]
Let $H$ be a $t$-uniform hypergraph and let $u,v \in V$ satisfy $d(u) > d(v)+1$. Then there exists a hinge-flip that decreases $d(u)$ by $1$ and increases $d(v)$ by $1$.
\end{lemma}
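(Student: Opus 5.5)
We argue by counting the $(t-1)$-subsets that can serve as the common part of $e$ and $e'$. For a vertex $x$, let
\[
\mathcal{L}(x)=\{\, S\in\tbinom{V\setminus\{u,v\}}{t-1} : S\cup\{x\}\in E \,\}
\]
be the link of $x$ restricted to sets avoiding both $u$ and $v$. A set $S\in\mathcal{L}(u)\setminus\mathcal{L}(v)$ gives exactly the required hinge-flip. We remove $e=S\cup\{u\}\in E$ and add $e'=S\cup\{v\}$. Then $e'\notin E$ by the choice of $S$, $|e\cap e'|=|S|=t-1$, and the only degree changes are $d(u)\mapsto d(u)-1$ and $d(v)\mapsto d(v)+1$. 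So it suffices to show that $\mathcal{L}(u)\not\subseteq\mathcal{L}(v)$.

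The hyperedges through $u$ split into those avoiding $v$, which correspond bijectively to $\mathcal{L}(u)$ via $e\mapsto e\setminus\{u\}$, and those containing both $u$ and $v$. Let $c$ be the number of hyperedges containing both $u$ and $v$. Then
\[
d(u)=|\mathcal{L}(u)|+c, \qquad d(v)=|\mathcal{L}(v)|+c .
\]
The common edges contribute equally to both degrees, which is why they are harmless. The hypothesis $d(u)>d(v)+1$ therefore gives $|\mathcal{L}(u)|>|\mathcal{L}(v)|+1\ge|\mathcal{L}(v)|$. Hence $\mathcal{L}(u)\not\subseteq\mathcal{L}(v)$, and any $S$ in the difference works.

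The only delicate point is handling the hyperedges containing both $u$ and $v$. Such an edge cannot be moved from $u$ to $v$, since $v$ is already in it. This is exactly why the links are restricted to sets avoiding $v$, and why the counts must be compared after subtracting $c$ from both degrees. The argument in fact only uses $d(u)>d(v)$; the stronger hypothesis $d(u)>d(v)+1$ is needed only to make the flip balancing.
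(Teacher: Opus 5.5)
Your proof is correct and is essentially the paper's argument: your set $\mathcal{L}(u)\setminus\mathcal{L}(v)$ is exactly the paper's $\mathcal{A}_3$, and cancelling the $c$ edges through both $u$ and $v$ is the same count the paper uses to get $|\mathcal{A}_3|\ge d(u)-d(v)>0$. Your remark that only $d(u)>d(v)$ is needed also holds for the paper's proof.
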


\begin{proof}
Let
\[
\mathcal{N}(w) = \{ S \subseteq V \setminus \{w\} : |S| = t-1,\ S \cup \{w\} \in E \},
\]
so that $|\mathcal{N}(w)| = d(w)$.

Partition $\mathcal{N}(u)$ into three parts:
\begin{align*}
\mathcal{A}_1 &= \{ S \in \mathcal{N}(u) : v \in S \}, \\
\mathcal{A}_2 &= \{ S \in \mathcal{N}(u) : v \notin S,\ S \in \mathcal{N}(v) \}, \\
\mathcal{A}_3 &= \{ S \in \mathcal{N}(u) : v \notin S,\ S \notin \mathcal{N}(v) \}.
\end{align*}

Thus $\mathcal{N}(u) = \mathcal{A}_1 \cup \mathcal{A}_2 \cup \mathcal{A}_3$ is a disjoint union.

We claim that
\[
|\mathcal{A}_1| + |\mathcal{A}_2| \le d(v).
\]
Indeed, each $S \in \mathcal{A}_1$ corresponds to an edge containing both $u$ and $v$, and each $S \in \mathcal{A}_2$ corresponds to an edge $S \cup \{v\}$ containing $v$ but not $u$. These edges are all distinct, hence their total number is at most $d(v)$.

Therefore,
\[
|\mathcal{A}_3| = d(u) - (|\mathcal{A}_1| + |\mathcal{A}_2|) \ge d(u) - d(v) > 0.
\]

Thus there exists $S \in \mathcal{A}_3$. Then $S \cup \{u\} \in E$, $v \notin S$, and $S \cup \{v\} \notin E$. Replacing $S \cup \{u\}$ by $S \cup \{v\}$ gives the desired hinge-flip.
\end{proof}

\noindent
The existence of balancing hinge-flips allows us to redistribute degrees within a prescribed subset of vertices. The following lemma was proved in \cite{MRZ}, for sake of completeness, we present the proof here, too.

\begin{lemma}[Balancing on a subset]\label{lem:balance_subset}
Let $H=(V,E)$ be a $t$-uniform hypergraph and let $W \subseteq V$. Then there exists a $t$-uniform hypergraph $H'=(V,E')$ such that:
\begin{itemize}
\item $d_{H'}(v) = d_H(v)$ for all $v \in V \setminus W$,
\item the degrees of vertices in $W$ are almost regular in $H'$, and
\item
\[
\sum_{v \in W} d_{H'}(v) = \sum_{v \in W} d_H(v).
\]
\end{itemize}
\end{lemma}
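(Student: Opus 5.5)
We argue by repeatedly applying the previous lemma (existence of balancing hinge-flips) to pairs of vertices inside $W$, and show that this process terminates. Start from $H_0 = H$. As long as the current hypergraph $H_i$ is not almost regular on $W$, there are $u,v \in W$ with $d_{H_i}(u) > d_{H_i}(v)+1$. Take $u$ of maximum and $v$ of minimum degree in $W$. The previous lemma then gives a hinge-flip that removes an edge $S\cup\{u\}$ and adds $S\cup\{v\}$, producing $H_{i+1}$. The new hypergraph is again a simple $t$-uniform hypergraph on $V$, since the added edge was not present.

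Next we check that the invariants hold at every step. The hinge-flip changes only the degrees of $u$ and $v$: $d(u)$ drops by one and $d(v)$ rises by one. Since $u,v\in W$, every vertex in $V\setminus W$ keeps its degree. Moreover, $\sum_{w\in W} d(w)$ is unchanged. By induction both invariants persist to the final hypergraph $H'$.

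The only real point is termination, and we handle it with a potential function. Let
\[
\Phi(H_i) = \sum_{w\in W} d_{H_i}(w)^2 .
\]
Moving one unit of degree from $u$ to $v$ changes $\Phi$ by
\[
(d(u)-1)^2 - d(u)^2 + (d(v)+1)^2 - d(v)^2 = -2\bigl(d(u)-d(v)-1\bigr).
\]
Since $d(u)-d(v)\ge 2$, this change is at most $-2$. So $\Phi$ is a nonnegative integer that strictly decreases at each step, and the process stops after finitely many steps (at most $\Phi(H)/2$).

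When the process stops, no two vertices of $W$ have degrees differing by more than one. Hence the degrees on $W$ are almost regular in $H'=H_{\text{final}}$, which proves the statement. If the algorithmic bound matters later, one can note that the number of steps is at most $\sum_{w\in W}\max(0,d(w)-\lceil \bar d\rceil)$, where $\bar d$ is the average degree on $W$. For this existence lemma, however, the potential argument suffices.
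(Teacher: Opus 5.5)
Your proof is correct and follows the paper's route (repeated balancing hinge-flips inside $W$, with termination via a potential). The one difference is the potential: you use $\sum_{w\in W} d(w)^2$, which drops by $2(d(u)-d(v)-1)\ge 2$ at every balancing flip, so you avoid the paper's case analysis for $\sum_{w\in W}\min\{|d(w)-a|,|d(w)-b|\}$ with $a=\lfloor\bar d\rfloor$, $b=\lceil\bar d\rceil$.

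Your closing remark on the number of steps is false, however. Degrees $1,3,3$ on $W$ give $\lceil\bar d\rceil=3$ and hence a bound of $0$, although one flip is needed. Drop the remark, or replace it by the initial value of the paper's potential, which decreases by at least one per flip.
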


\begin{proof}
Starting from $H$, we iteratively apply balancing hinge-flips on pairs of vertices $u,v \in W$ with $d(u) > d(v)$.

Such a hinge-flip decreases $d(u)$ by $1$ and increases $d(v)$ by $1$, hence it preserves the total degree on $W$, and does not affect degrees outside $W$.

Let
\[
\overline{d} = \frac{1}{|W|} \sum_{w \in W} d(w)
\]
denote the average degree on $W$, and let $a = \lfloor \overline{d} \rfloor$, $b = \lceil \overline{d} \rceil$.

Consider the potential function
\[
\Phi = \sum_{w \in W} \min\left\{ |d(w) - a|,\; |d(w) - b| \right\}.
\]

We claim that as long as the degrees on $W$ are not almost regular, there exist $u,v \in W$ such that a balancing hinge-flip strictly decreases $\Phi$.

Indeed, if the degrees are not almost regular, then there exists a vertex with degree at most $a-1$ or at least $b+1$.

If there exists $v \in W$ with $d(v) \le a-1$, then since the average is $\overline{d}$, there must exist $u \in W$ with $d(u) \ge a+1$. Applying a balancing hinge-flip from $u$ to $v$ decreases $|d(v)-a|$ and do not increase $ \min\left\{ |d(u) - a|,\; |d(u) - b| \right\}$, and hence decreases $\Phi$.

Similarly, if there exists $u \in W$ with $\deg(u) \ge b+1$, then there exists $v \in W$ with $\deg(v) \le b-1$, and a balancing hinge-flip from $u$ to $v$ decreases $\Phi$.

Thus, in every step, $\Phi$ strictly decreases. Since $\Phi$ is a nonnegative integer, the process terminates. At termination, all degrees in $W$ belong to $\{a,b\}$, i.e., they are almost regular.
\end{proof}

\section{FPT algorithm}
First, we observe that $t$-uniform hypergraphs can be unequivocally factored in the following way.
Let $V$ be partitioned into \emph{degree classes}
\[
V = V_1 \cup \cdots \cup V_k,
\]
where $|V_i| = n_i$ and each vertex in $V_i$ has degree $\delta_i$.

\begin{definition}
The \emph{type} of a hyperedge $e$ is the vector $x = (x_1,\dots,x_k)$ such that
\[
x_i = |e \cap V_i| \quad \text{for each } i.
\]
\end{definition}
\noindent It follows that $\sum_{i=1}^k x_i = t$.
Let $\mathcal{T}$ denote the set of all possible types. Then
\[
|\mathcal{T}| = \binom{t+k-1}{k-1}.
\]

\begin{definition}
For each type $x \in \mathcal{T}$, the \emph{factor} corresponding to $x$ is the subhypergraph consisting of all hyperedges of type $x$.
\end{definition}

Thus any $t$-uniform hypergraph induces a decomposition of its edge set into factors indexed by $\mathcal{T}$.
We encode this decomposition via the following vector.

\begin{definition}
The \emph{spectrum vector} of a $t$-uniform hypergraph is the vector
\[
y = (y_x)_{x \in \mathcal{T}} \in \mathbb{Z}_{\ge 0}^{\mathcal{T}},
\]
where $y_x$ denotes the number of hyperedges of type $x$.
\end{definition}

The spectrum vector determines the total degree within each vertex class.

\begin{proposition}\label{prop:spectrum_degrees}
Let $y$ be the spectrum vector of a $t$-uniform hypergraph. Then for each $i \in [k]$,
$$
\sum_{x \in \mathcal{T}} x_i \, y_x \;=\; \sum_{v \in V_i} d(v).  
$$

\end{proposition}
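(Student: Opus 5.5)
The plan is a double-counting argument on the incidences between hyperedges and vertices of a fixed class $V_i$. Fix $i \in [k]$ and consider the set of incidence pairs
\[
I_i = \{ (v,e) : v \in V_i,\ e \in E,\ v \in e \}.
\]
Counting $I_i$ in two ways will give the two sides of the identity in Proposition~\ref{prop:spectrum_degrees}.

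First, I will count $I_i$ by vertices. For a fixed $v \in V_i$, the number of hyperedges containing $v$ is exactly $d(v)$ by the definition of degree. Summing over $v \in V_i$ gives $|I_i| = \sum_{v \in V_i} d(v)$.

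Second, I will count $I_i$ by hyperedges. For a fixed $e \in E$, the number of $v \in V_i$ with $v \in e$ is $|e \cap V_i|$, which is by definition the $i$-th coordinate $x_i$ of the type $x$ of $e$. Since the classes $V_1,\dots,V_k$ partition $V$, every hyperedge has exactly one type $x \in \mathcal{T}$. I will therefore group the hyperedges by type. By the definition of the spectrum vector there are precisely $y_x$ hyperedges of type $x$, and each contributes $x_i$ pairs to $I_i$. This yields
\[
|I_i| = \sum_{e \in E} |e \cap V_i| = \sum_{x \in \mathcal{T}} x_i\, y_x .
\]

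Equating the two counts proves the claim for each $i$. There is no genuine obstacle here. The only point needing care is the grouping step in the hyperedge count: it relies on the types partitioning $E$ and on $y_x$ counting each hyperedge exactly once, and both follow directly from the definitions of type and spectrum vector.
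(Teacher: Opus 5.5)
Your proof is correct and is the same incidence-counting argument the paper uses: each hyperedge of type $x$ contributes $x_i$ incidences with $V_i$, and summing over all edges gives the identity. You simply spell out the vertex-side count and the grouping of edges by type, which the paper states in a single sentence.
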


\begin{proof}
Each hyperedge of type $x$ contributes exactly $x_i$ incidences to vertices in $V_i$. Summing over all edges yields the identity.
\end{proof}

In particular, the spectrum vector determines the total degree in each class $V_i$, and hence is consistent with the degree sequence if and only if
\begin{equation}
\sum_{x \in \mathcal{T}} x_i \, y_x = n_i \delta_i
\quad \text{for all } i \in [k].
    \label{eq:degree-constraints}
\end{equation}

This representation together with the following feasibility constraints will serve as the basis of our integer programming formulation.
\begin{lemma}[Feasible sizes]\label{lem:feasible_sizes}
Fix a type $x=(x_1,\dots,x_k) \in \mathcal{T}$. Let
\[
M_x := \prod_{i=1}^k \binom{|V_i|}{x_i}.
\]
Then for every integer $m$ with
\[
0 \le m \le M_x,
\]
there exists a $t$-uniform hypergraph consisting only of edges of type $x$ with exactly $m$ edges.
\end{lemma}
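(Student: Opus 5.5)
This statement is essentially a counting observation, and I would prove it by directly exhibiting the set of all hyperedges of type $x$ and then taking any subset of the desired size.

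\textbf{Step 1: the full factor.} Let $\mathcal{E}_x$ be the family of all $t$-subsets $e\subseteq V$ with $|e\cap V_i| = x_i$ for every $i\in[k]$. Since the classes $V_1,\dots,V_k$ partition $V$, such an $e$ is uniquely determined by the tuple $(e\cap V_1,\dots,e\cap V_k)$. Here each $e\cap V_i$ is an $x_i$-subset of $V_i$. Conversely, any choice of $x_i$-subsets $S_i\subseteq V_i$, one for each $i$, yields a set $S_1\cup\cdots\cup S_k$ of size $\sum_i x_i = t$, and this set has type $x$. This gives a bijection between $\mathcal{E}_x$ and $\prod_{i}\binom{V_i}{x_i}$, so $|\mathcal{E}_x| = \prod_{i=1}^k\binom{|V_i|}{x_i} = M_x$.

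\textbf{Step 2: selecting $m$ edges.} For any integer $0\le m\le M_x$, choose an arbitrary subfamily $E\subseteq\mathcal{E}_x$ with $|E|=m$; this is possible precisely because $m\le |\mathcal{E}_x|$. Then $H=(V,E)$ has $E\subseteq\binom{V}{t}$, so it is a simple $t$-uniform hypergraph. All of its edges have type $x$ and there are exactly $m$ of them.

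\textbf{Edge cases.} If some $x_i>|V_i|$, then $M_x=0$, so only $m=0$ is allowed and the empty hypergraph works. The convention $\binom{a}{b}=0$ for $b>a$ keeps the counting in Step 1 valid in this case. There is no real obstacle in this argument. The only point needing care is the bijection in Step 1, which relies on the $V_i$ being pairwise disjoint; that is what ensures distinct tuples give distinct edges and that the union has exactly $t$ elements.
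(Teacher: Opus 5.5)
Your proof is correct and follows the same route as the paper: there are exactly $M_x$ hyperedges of type $x$, and you take any $m$ of them. The only difference is that you spell out the bijection with $\prod_i \binom{V_i}{x_i}$ and the case $x_i > |V_i|$, whereas the paper simply asserts the count.
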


\begin{proof}
There are exactly $M_x$ distinct hyperedges of type $x$. Hence for any $m \le M_x$, we may choose an arbitrary subset of $m$ such edges.

This directly yields a $t$-uniform hypergraph of type $x$ with $m$ edges.
\end{proof}



We are ready to fomulate the integer programming feasibility problem. Introduce a variable $y_x$ for each type $x \in \mathcal{T}$, where $y_x$ denotes the number of hyperedges of type $x$. The integer feasibility problem consist of $k$ equations in form represented in equation~\ref{eq:degree-constraints} as well as the following $2|\mathcal{T}|$ number of inequalities.
For each type $x \in \mathcal{T}$, consider the following pair of inequalities as lower and upper bounds on $y_x$.
\begin{equation}
0 \le y_x \le \prod_{i=1}^k {n_i\choose x_i}. \label{eq:capacity-constraints}    
\end{equation}
Therefore, the integer programming feasibility problem consist of $k$ equations on the degree constraints (also expressable as $2k$ inequalities) and $2|\mathcal{T}| = 2{t+k-1\choose k-1}$ inequalites on the feasable sizes of $y_x$. The number of variables satisfies:
\[
|\mathcal{T}| = \binom{t+k-1}{k-1},
\]
which depends only on $(k,t)$.
Thus the integer feasibility problem has fixed dimension, and even further, a fixed number of inequalities in which only the coefficients depends on the input.

The key lemma of our FPT algorithm is the following.

\begin{lemma}\label{lem:ip_equivalence}
The degree sequence is $t$-hypergraphical if and only if the integer system expressed in equations~\ref{eq:degree-constraints}~and~\ref{eq:capacity-constraints} has a feasible solution.
\end{lemma}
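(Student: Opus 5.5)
The forward direction is immediate, so the real work is the converse: a feasible integer point must be turned into an actual realization. Here the plan is to build some hypergraph with the right per-class degree totals and then equalize degrees inside each class by balancing.

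\emph{Only if.} Suppose $H$ realizes the degree sequence, with degree classes $V_1,\dots,V_k$, and let $y$ be its spectrum vector. By Proposition~\ref{prop:spectrum_degrees}, $\sum_{x} x_i y_x = \sum_{v\in V_i} d(v) = n_i\delta_i$ for every $i$, so \eqref{eq:degree-constraints} holds. For each type $x$, the edges of type $x$ are distinct $t$-subsets meeting each $V_i$ in exactly $x_i$ vertices. There are only $\prod_i \binom{n_i}{x_i}$ such subsets, so $0\le y_x\le \prod_i\binom{n_i}{x_i}$ and \eqref{eq:capacity-constraints} holds.

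\emph{If.} Let $y$ be an integer solution. Fix an arbitrary vertex set partitioned as $V=V_1\cup\dots\cup V_k$ with $|V_i|=n_i$. For each type $x$, use Lemma~\ref{lem:feasible_sizes} to choose $y_x$ distinct hyperedges of type $x$; the capacity constraint is exactly what makes this possible. Two hyperedges of different types are different sets, so the union over all types is a simple $t$-uniform hypergraph $H_0$. Its spectrum vector is $y$, so by Proposition~\ref{prop:spectrum_degrees} and \eqref{eq:degree-constraints} we have $\sum_{v\in V_i} d_{H_0}(v)=n_i\delta_i$ for every $i$. The individual degrees within a class may still be unequal.

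To fix this, apply Lemma~\ref{lem:balance_subset} successively with $W=V_1$, then $W=V_2$, and so on up to $W=V_k$; classes with $n_i=0$ are skipped. Three properties of each step are needed.
\begin{itemize}
\item The step leaves all degrees outside $V_i$ unchanged. Hence classes already treated keep their degrees, and the class totals $n_j\delta_j$ of untreated classes are preserved.
\item The step preserves the total degree $n_i\delta_i$ on $V_i$.
\item After the step, the degrees on $V_i$ lie in $\{\lfloor\overline d\rfloor,\lceil\overline d\rceil\}$, where $\overline d$ is the average degree on $V_i$.
\end{itemize}
Since $\overline d=\delta_i$ is an integer, the floor and ceiling coincide, so every vertex of $V_i$ ends with degree exactly $\delta_i$.

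After all $k$ steps, every vertex in $V_i$ has degree $\delta_i$ for every $i$, so the sequence is $t$-hypergraphical. The hinge-flips change the types of edges, so the final spectrum may differ from $y$; this is harmless, because only the degrees matter.

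The only delicate point is this step-by-step bookkeeping: balancing one class must not disturb classes already treated. That is guaranteed because the hinge-flips in Lemma~\ref{lem:balance_subset} only move degree between two vertices of $W$. The rest of the argument is routine.
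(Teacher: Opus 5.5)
Your proof is correct and follows the paper's argument essentially step for step. The forward direction counts incidences and the number of $t$-sets of each type. The converse takes the union of $y_x$ edges of each type, then applies Lemma~\ref{lem:balance_subset} class by class, using that the average degree $\delta_i$ on $V_i$ is an integer.

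One small correction to a side remark: a balancing hinge-flip with $W=V_i$ swaps one vertex of $V_i$ for another vertex of $V_i$, so it does not change the edge's type and the final spectrum is still exactly $y$ (though, as you note, nothing depends on this).
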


\begin{proof}
($\Rightarrow$)
Let $H$ be a realization. For each type $x$, let $y_x$ be the number of edges of type $x$ in $H$. Then the degree constraints hold by counting incidences between edges and vertex classes, and clearly $0 \le y_x \le M_x$.

($\Leftarrow$)
Suppose $(y_x)_{x \in \mathcal{T}}$ is a feasible solution. For each type $x$, by Lemma~\ref{lem:feasible_sizes}, we can construct a hypergraph $H_x$ consisting of exactly $y_x$ edges of type $x$. Let $H$ be the union of all $H_x$. Then $H$ is $t$-uniform,simple and has spectrum $y$.

By Proposition~\ref{prop:spectrum_degrees}, for each $i \in [k]$,
\[
\sum_{v \in V_i} \deg_H(v) = \sum_{x \in \mathcal{T}} x_i y_x = n_i \delta_i.
\]

Thus, for each class $V_i$, the average degree is exactly $\delta_i$. Starting from $H$, we now apply Lemma~\ref{lem:balance_subset} with $W = V_i$ for each $i \in [k]$. This preserves degrees outside $V_i$, and transforms the degrees inside $V_i$ into an almost regular sequence with the same total sum $n_i \delta_i$. Since the average is an integer, the resulting degrees on $V_i$ are all equal to $\delta_i$.

Performing this procedure successively for all classes $V_1,\dots,V_k$, we obtain a $t$-uniform hypergraph realizing the prescribed degree sequence.
\end{proof}


We are ready to prove the main theorem.
\begin{theorem}
$t$-uniform hypergraphicality is FPT parameterized by $(k,t)$.
\end{theorem}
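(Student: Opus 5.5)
The plan is to reduce the decision problem, via Lemma~\ref{lem:ip_equivalence}, to an integer feasibility problem in fixed dimension, and then solve that problem with Lenstra's algorithm (or its refinements due to Kannan, or Frank and Tardos). Given the compressed input $(\delta_1,n_1),\dots,(\delta_k,n_k)$, we first enumerate the type set $\mathcal{T}$, i.e.\ all vectors $x\in\mathbb{Z}_{\ge 0}^k$ with $\sum_i x_i=t$. This takes time $O\bigl(k\binom{t+k-1}{k-1}\bigr)$, a function of $(k,t)$ only. For each type $x$ we then write down the $k$ degree equations \eqref{eq:degree-constraints}, whose coefficients $x_i\le t$ and right-hand sides $n_i\delta_i$ have bit length $O(L)$, together with the capacity bounds \eqref{eq:capacity-constraints}.

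The one step that needs care is computing the upper bounds $M_x=\prod_i\binom{n_i}{x_i}$ in time polynomial in $L$, since $n_i$ may be exponential in $L$. Because $x_i\le t$, we have $\binom{n_i}{x_i}\le n_i^{t}$, which has $O(t\log n_i)$ bits and can be computed by at most $t$ multiplications and divisions of numbers of that size. Hence $M_x$ has $O(t\sum_i\log n_i)=O(tL)$ bits and costs $\mathrm{poly}(t,L)$ time. If $x_i>n_i$ the binomial is simply $0$. Altogether the encoding length of the system is at most $g(k,t)\cdot\mathrm{poly}(L)$, where $g$ grows like $\binom{t+k-1}{k-1}$ times polynomial factors in $t$. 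If one wants to avoid even these bounds, a cruder alternative is to replace $M_x$ by a simpler cap, but that cap must itself remain exact for correctness, so computing $M_x$ directly is the right choice.

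Next we apply Lenstra's theorem. Integer feasibility with $p$ variables and input length $\ell$ is decidable in time $p^{O(p)}\,\mathrm{poly}(\ell)$. Here $p=|\mathcal{T}|=\binom{t+k-1}{k-1}$ and $\ell\le g(k,t)\,\mathrm{poly}(L)$, so the total running time is $f(k,t)\cdot\mathrm{poly}(L)$. By Lemma~\ref{lem:ip_equivalence}, the system is feasible exactly when the degree sequence is $t$-hypergraphical, so this procedure decides the problem correctly, which proves the theorem.

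The main obstacle, mild as it is, is the bookkeeping of encoding lengths just described: we must make sure no quantity polynomial in $n$ (as opposed to $\log n$) enters the running time. In particular, the realization itself is never constructed; the balancing argument is used only in the proof of Lemma~\ref{lem:ip_equivalence}. We should also note the trivial edge cases: $k=0$, and types with $x_i>n_i$, which are forced to $y_x=0$ by the bound $M_x=0$.
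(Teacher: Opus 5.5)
Your proposal is correct and follows the paper's proof: reduce via Lemma~\ref{lem:ip_equivalence} to an integer feasibility problem in $\binom{t+k-1}{k-1}$ variables, check that the coefficients $n_i\delta_i$ and $M_x=\prod_i\binom{n_i}{x_i}$ have bit length polynomial in $L$, and apply Lenstra's theorem. Your bookkeeping is slightly more careful than the paper's: you use the correct bound $x_i\le t$ (the paper writes $x_i\le k$), and you account for the time to compute $M_x$, not just its size.
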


\begin{proof}
For each input 
$$
(n_1, \delta_1), (n_2,\delta_2),\ldots, (n_k,\delta_k),
$$
there is a $t$-uniform hypergraph with degrees prescribed in the input if and only if the integer programming feasibility problem expressed in equations~\ref{eq:degree-constraints}~and~\ref{eq:capacity-constraints} has a solution, according to Lemma~\ref{lem:ip_equivalence}. This is an integer feasibility problem with fixed number of variables.
By Lenstra's theorem~\cite{Lenstra}, integer programming in fixed dimension can be solved in time $f(k,t)\cdot \mathrm{poly}(L)$, where $L$ is the encoding length of the input.  The coefficients in the integer programming feasibility problem are either in form $n_i \delta_i$ which has $O(\log(n_i)+\log(\delta_i))$ digits or in form $\prod_{i=1}^k {n_i\choose x_i}$. Since each $x_i \le k$, $k$ is a constant, and for all $n_i$ and $x_i$, ${n_i\choose x_i} \le n_i^{x_i}$, thus, ${n_i\choose x_i} \le n_i^k$ each such coefficient has  $O\left(\sum_{i=1}^k \log(n_i)\right)$ digits.

Since all coefficients are polynomially bounded in the input size, and the number of coefficients is contant, the result follows.
\end{proof}

\backmatter

\bmhead{Acknowledgements}

The work was supported by the Hungarian NKFIH grant K132696 and by the European Union project RRF2.3.1-21-2022-00006 within the framework of Health Safety National Laboratory Grant no RRF-2.3.1-21-2022-00006.

\bibliography{sn-bibliography}

\end{document}